\documentclass[11pt]{amsart}
\usepackage{graphicx}

\usepackage{amsmath, amsfonts, amssymb, amsthm, hyperref, mathtools, mathrsfs}
\usepackage[table]{xcolor}
\usepackage{fullpage}

\newtheorem{theorem}{Theorem}[section]
\newtheorem{conjecture}[theorem]{Conjecture}

\newtheorem{proposition}[theorem]{Proposition}

\numberwithin{theorem}{section}
\numberwithin{equation}{section}

\def\l{\lambda}
\def\a{\alpha}

\def\N{\mathbb{N}}

\begin{document}

\title{Fixed perimeter analogues of some partition results}

\author{Gabriel Gray}
\address{University of Dayton}
\email{grayg1@udayton.edu}

\author{Emily Payne}
\address{University of Texas Rio Grande Valley}
\email{emily.payne01@utrgv.edu}

\author{Holly Swisher}
\address{Oregon State University}
\email{swisherh@oregonstate.edu}

\author{Ren Watson}
\address{University of Texas at Austin}
\email{renwatson@utexas.edu}

\thanks{The authors were supported by NSF grant DMS-2101906.}

\begin{abstract}
Euler's partition identity states that the number of partitions of $n$ into odd parts is equal to the number of partitions of $n$ into distinct parts.  Strikingly, Straub proved in 2016 that this identity also holds when counting partitions of any size with largest hook (perimeter) $n$. This has inspired further investigation of partition identities and inequalities in the fixed perimeter setting.  Here, we explore fixed perimeter analogues of some well-known partition results inspired by Euler's partition identity. 
\end{abstract}

\maketitle

\section{Introduction and Statement of Results}

A \textit{partition} of a positive integer $n$ is a nonincreasing sequence of positive integers called \textit{parts} that sum to $n$.  The partition function $p(n)$ counts the number of partitions of $n$, and $p(n\mid *)$ counts the number of partitions of $n$ satisfying some condition $*$.  One can visualize a partition $\pi$ with a \textit{Ferrers diagram}, in which each part $\pi_i$ is represented by a left-justified row of $\pi_i$ dots ordered from top to bottom.   

For any partition $\pi$, we let $\alpha(\pi)$ denote the largest part and $\lambda(\pi)$ the number of parts. We define the \emph{perimeter} of a partition $\pi$ to be $\alpha(\pi)+\lambda(\pi)-1$, which is the number of dots along the top row and left column of the Ferrers diagram of $\pi$, i.e., the largest hook length of $\pi$.  For example, the following is the Ferrers diagram of the partition $5+3+3$ which has perimeter $7$. 

\begin{center}
    \begin{tabular}{c c c c c}
        \color{red} $\bullet$ & \color{red} $\bullet$ & \color{red} $\bullet$ & \color{red} $\bullet$ & \color{red} $\bullet$ \\
        \color{red} $\bullet$ & $\bullet$ & $\bullet$ &  &  \\
        \color{red} $\bullet$ & $\bullet$ & $\bullet$ &  &  
    \end{tabular} \\
\end{center}

As an analogue to the partition function $p(n)$, let $r(n)$ count the number of partitions with fixed perimeter $n$.  Thus $p(4)=5$, since 
\[
4, \enspace 3+1, \enspace 2+2, \enspace 2+1+1, \enspace 1+1+1+1 
\]
are the partitions of $4$, but $r(4)=8$, since 
\[
4, \enspace 3+1, \enspace 3+2, \enspace 3+3, \enspace 2+1+1, \enspace 2+2+1, \enspace 2+2+1, \enspace 1+1+1+1 
\]
are the partitions (of any $n$) that have perimeter $4$.

Partition identities, which equate partition counting functions with different conditions, have been of interest in this area for centuries.  One of the foundational partition identities, due to Euler, states that the number of partitions of $n$ into odd parts equals the number of partitions of $n$ into distinct parts.  Namely,
\begin{equation} \label{Euler}
p(n\mid \text{odd parts}) = p(n\mid \text{distinct parts}).
\end{equation}

In 2016, Straub \cite{Straub} proved the following beautiful analogue of Euler's identity for fixed perimeter partitions, 
\begin{equation} \label{thm:Straub}
r(n\mid \text{odd parts}) = r(n\mid \text{distinct parts}).
\end{equation}
In light of \eqref{thm:Straub}, there has been significant interest in investigating the relationship between classical partition identities and inequalities, and their counterparts in the fixed perimeter setting. 


\subsection{Franklin-type identities}
One direction of generalization of \eqref{Euler} is through its interpretation as
\[
O_{0,2}(n) = p(n\mid\text{no part divisible by }2) = p(n\mid\text{no part appears }\geq 2 \text{ times}) = D_{0,2}(n).
\]
Letting  $O_{j,k}(n)$ count the number of partitions of $n$ with exactly $j$ part sizes divisible by $k$ (the parts can repeat), and $D_{j,k}(n)$ count the number of partitions of $n$ with exactly $j$ part sizes with parts appearing at least $k$ times, Franklin, \cite{Franklin} generalizing a result of Glaisher \cite{glaisher1883}, proved that for all $k \geq 2$, $j\geq 0$, $n\geq 1$,
\begin{equation} \label{Franklin}
O_{j,k}(n) = D_{j,k}(n).
\end{equation}

Two recent variations on Franklin's identity are given as follows.  In the case when $j=1$, Amdeberhan, Andrews, and Ballantine \cite{AAB} proved that if $O_{1,k}^u(n)$ is the number of partitions counted by $O_{1,k}(n)$ where the unique part divisible by $k$ is occurs exactly $u$ times, and  $D_{1,k}^u(n)$ is the number of partitions counted by $D_{1,k}(n)$ where $u$ is the unique part that is repeated at least $k$ times, then
\begin{equation} \label{AAR1}
O_{1,k}^u(n) = D_{1,k}^u(n).
\end{equation}
Furthermore, if we define $O_{j,k,b}(n)$ to be the number of partitions of $n$ with exactly $j$ part sizes divisible by $kb$ (parts can repeat) and $D_{j,k,b}(n)$ to be the number of partitions of $n$ with exactly $j$ part sizes both divisible by $b$ and with parts appearing at least $k$ times, then Gray et al. \cite{GHKPSW} proved that for all $k \geq 2$, $j\geq 0$, $b\geq 1$, $n\geq 1$,
\begin{equation} \label{REU1thm}
O_{j,k,b}(n) = D_{j,k,b}(n).
\end{equation}

Define fixed perimeter analogues of $O_{j,k}(n)$ and $D_{j,k}(n)$ by
\begin{align*}
FO_{j,k}(n) &= r(n\mid\text{exactly }j \text{ parts are divisible by }k), \\ 
FD_{j,k}(n) &= r(n\mid\text{exactly }j \text{ parts appear }\geq k \text{ times}).
\end{align*}
In 2022, Andrews, Amdeberhan, and Ballantine \cite[Thm. 1.4]{AAB} proved that for all $n\geq 1$,
\begin{equation}\label{AAB12}
FO_{1,2}(n) = FD_{1,2}(n),
\end{equation}
which is a direct fixed perimeter analogue of the $j=1$, $k=2$ case of \eqref{Franklin}.  Furthermore in 2023, Lin, Xiong, and Yan \cite{LXY}, proved a fixed perimeter identity related to the $k=2$ case of Franklin's identity \eqref{Franklin}.  Namely, they show\footnote{We have reworded the statement of \cite[Thm. 3]{LXY} to more easily compare with related results discussed here.} that for any $j\geq 0$, $n\geq 1$,
\begin{equation}\label{LXY}
r(n \mid \text{number of even parts} =j ) = r(n \mid \text{(number of parts) - (number of distinct parts)} =j ).
\end{equation}
Here the left hand side is a direct fixed perimeter analogue of $O_{j,2}(n)$, while the right hand side is not a direct analogue of $D_{0,2}(n)$.

Our first result is the following direct fixed perimeter analogue of the $k=2$ case of Franklin's identity, generalizing \eqref{thm:Straub} and \eqref{AAB12}. 

\begin{theorem} \label{k=2}
For all $n$, $j \geq 0$, $$FO_{j,2}(n)=FD_{j,2}(n).$$
\end{theorem}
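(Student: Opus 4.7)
The plan is to prove the theorem by computing the bivariate generating functions
\begin{align*}
F(x,q) &:= \sum_\pi x^{e(\pi)} q^{\mathrm{perim}(\pi)}, \\
G(x,q) &:= \sum_\pi x^{d(\pi)} q^{\mathrm{perim}(\pi)},
\end{align*}
where the sums range over nonempty partitions, $e(\pi)$ counts the distinct even parts of $\pi$, and $d(\pi)$ counts the distinct parts of $\pi$ appearing at least twice. Since $FO_{j,2}(n) = [x^j q^n] F(x,q)$ and $FD_{j,2}(n) = [x^j q^n] G(x,q)$, the theorem reduces to proving the identity $F = G$. I parametrize a partition $\pi$ by its list of distinct parts $m_1 > m_2 > \cdots > m_s \geq 1$ with multiplicities $e_1, \ldots, e_s \geq 1$, so that $\mathrm{perim}(\pi) = m_1 - 1 + \sum_i e_i$.

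Computing $G$ is direct, since $d(\pi)$ depends only on the multiplicities and so the $x$-weight factors across distinct parts. Each distinct part contributes a multiplicity factor $\sum_{e \geq 1} x^{\mathbf{1}[e \geq 2]} q^e = (q - q^2 + xq^2)/(1-q)$, while the strict-chain sum evaluates to $\sum_{m_1 > \cdots > m_s \geq 1} q^{m_1 - 1} = q^{s-1}/(1-q)^s$ via the substitution $m_i = (s - i + 1) + b_i$ with $b_1 \geq \cdots \geq b_s \geq 0$. Summing the resulting geometric series in $s$ yields
$$G(x,q) = \frac{q + (x-1)q^2}{1 - 2q + (1-x)q^3}.$$

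Computing $F$ is the main obstacle, because $e(\pi)$ is coupled to the parities of the $m_i$ and no longer factors over distinct parts. I would condition on the largest distinct part $m_1 = N$: the remaining $s-1$ distinct parts form an $(s-1)$-subset of $\{1, \ldots, N-1\}$, and their weighted contribution is $[z^{s-1}] \prod_{k=1}^{N-1}(1 + z\, w(k))$ where $w(k) = x$ if $k$ is even and $w(k) = 1$ otherwise. Grouping consecutive integers into pairs collapses this product into a power of $M := (1+z)(1+xz) = 1 + (1+x)z + xz^2$, with an extra factor of $(1+z)$ when $N-1$ is odd. Incorporating the contribution $w(N) q^{N-1}$ of $m_1$ itself and summing the two resulting geometric series (one over even $N$, one over odd $N$) gives
$$H_s(x,q) := \sum_{m_1 > \cdots > m_s \geq 1} x^{\#\{i \,:\, m_i \text{ even}\}} q^{m_1 - 1} = [z^{s-1}]\, \frac{1 + xq + xqz}{1 - q^2 - q^2(1+x)z - xq^2 z^2}.$$
Writing the rational function on the right as $A(z)$, the outer sum $F(x,q) = \sum_{s \geq 1} (q/(1-q))^s H_s(x,q)$ equals $(q/(1-q))\, A(q/(1-q))$; a short algebraic simplification (clearing $(1-q)^2$ from the denominator) shows this coincides with the closed form displayed above for $G(x,q)$, completing the proof. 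The main difficulty lies in the parity bookkeeping behind the formula for $H_s$; a bijective argument would be appealing, but conjugation preserves perimeter without matching the statistics, while the classical Glaisher--Franklin bijection matches the statistics without preserving perimeter, so the generating-function route appears to be the cleanest.
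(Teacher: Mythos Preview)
Your proof is correct and arrives at exactly the same two--variable closed form
\[
\frac{q+(x-1)q^2}{1-2q+(1-x)q^3}
\]
that the paper obtains (with your $x$ playing the role of the paper's $z$).  The paper's generating--function argument, however, is organised differently: rather than parametrising by the set of distinct parts $m_1>\cdots>m_s$ and their multiplicities, it encodes each partition by its \emph{profile word} in $\{E,N\}$ and builds the generating function from rational blocks $A,B,C$ that generate single $EN_m$--runs with $z$ marking $m\ge 1$ or $m\ge 2$ as needed.  This sidesteps the parity bookkeeping you isolate in $H_s$ entirely, because in the profile picture ``part $i$ is even'' corresponds simply to ``the $i$th $E$--block is in an even position'', which can be handled by pairing consecutive blocks via $1/(1-AB)$.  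Your approach is more elementary in that it avoids introducing the profile encoding and the auxiliary arm/leg variables, at the cost of the extra $H_s$ computation; the paper's approach is more uniform and also retains the refinement by $\alpha$ and $\lambda$ before specialising.  The paper additionally supplies an explicit bijection on profile words---precisely the kind of argument you note is not obvious from conjugation or Glaisher--Franklin---so your closing remark is well placed.
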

In light of \eqref{LXY} we also have the following corollary,
\[
r(n \mid \text{exactly $j$ parts are repeated}) = r(n \mid \text{(number of parts) - (number of distinct parts)} =j ).
\]
For $k \geq 2$, $FO_{j,k}(n)$ and $FD_{j,k}(n)$ are not equal for all $n$, however computational evidence suggests the following.

\begin{conjecture}\label{fofdconjecture}
If $j \geq 0, k \geq 2$, then $FD_{j,k}(n) \geq FO_{j,k}(n)$ for sufficiently large $n$.
\end{conjecture}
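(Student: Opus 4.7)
The plan is to reduce the comparison to a fixed-largest-part level. For each $a\in\{1,\ldots,n\}$ I would write $FO_{j,k}(n,a)$ and $FD_{j,k}(n,a)$ for the contribution to $FO_{j,k}(n)$ and $FD_{j,k}(n)$ from partitions whose largest part equals $a$; then $\ell=n-a+1$ is the number of parts, and each partition is encoded by a set $S\subseteq\{1,\ldots,a\}$ with $a\in S$ (the distinct part sizes that appear) together with a composition $(m_i)_{i\in S}$ of $\ell$ into positive parts. The total is then $FO_{j,k}(n)=\sum_{a=1}^{n}FO_{j,k}(n,a)$ and likewise for $FD$.

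For $FO$, the condition ``exactly $j$ part sizes divisible by $k$'' depends only on $S$, and the number of admissible multiplicity compositions for a given $S$ is $\binom{\ell-1}{|S|-1}$. Summing over admissible $S$ via Vandermonde's convolution should produce a single-binomial closed form for $FO_{j,k}(n,a)$, with a mild case split according to whether $k\mid a$. For $FD$, I would parametrize additionally by $T\subseteq S$ with $|T|=j$, the set of part sizes of multiplicity $\geq k$, and perform the substitution $m_i\mapsto m_i'+(k-1)$ for $i\in T$. After another Vandermonde collapse, $FD_{j,k}(n,a)$ becomes a short sum of positive binomial terms indexed by the possibilities for $T$.

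To conclude the conjecture, I would compare the two resulting formulas. Since Theorem~\ref{k=2} already gives equality for $k=2$, the interesting case is $k\geq 3$, and the goal is to show that the $FD$ expression dominates the $FO$ expression asymptotically in $n$. A natural route is to read off the generating functions $\sum_n FO_{j,k}(n)\,q^n$ and $\sum_n FD_{j,k}(n)\,q^n$ from the binomial formulas; both should be rational in $q$ with denominators built from factors of $(1-q)$ and $(1-q^k)$, so the dominant singularity together with its leading coefficient determines the $n\to\infty$ behavior. The plan is to show that the contribution at the dominant pole of the $FD$ generating function strictly exceeds the corresponding contribution for $FO$ whenever $k\geq 3$, so that $FD_{j,k}(n)-FO_{j,k}(n)$ is positive for all sufficiently large $n$.

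The hard part will be the asymptotic comparison. Even with explicit closed forms per $a$, both sides are sums over $a$ of products of binomials of comparable order, and naive term-by-term comparison can fail for small $a$. I expect the bulk of the work to lie in extracting the dominant contribution (likely from $a$ scaling linearly in $n$) and confirming the strict inequality there. An appealing alternative would be to build an injection from $FO$-partitions into $FD$-partitions that fails on only finitely many perimeters, but such a map must respect the fixed-perimeter constraint, which is precisely the obstruction that makes the classical Glaisher--Franklin bijections incompatible with perimeter; for that reason I expect the generating-function and singularity-analysis route to be the more tractable of the two, with uniformity in $j$ as an additional subtlety to control.
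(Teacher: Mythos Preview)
The statement you are attempting to prove is labeled a \emph{conjecture} in the paper and is not proved there; the authors only say that computational evidence suggests it. So there is no ``paper's own proof'' to compare against---your proposal is a plan of attack on an open problem, and should be read as such.

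As for the plan itself, there are two places where it is more optimistic than the details warrant. First, for $FD_{j,k}(n,a)$: once you fix $T\subseteq S$ with $|T|=j$, the parts in $S\setminus T$ must have multiplicity in $\{1,\dots,k-1\}$, not merely $\geq 1$, so the substitution $m_i\mapsto m_i'+(k-1)$ on $T$ leaves you with a \emph{bounded} composition problem on $S\setminus T$; this does not collapse by a single Vandermonde and instead produces an alternating inclusion--exclusion sum. That is manageable, but it is not the ``short sum of positive binomial terms'' you describe, and the resulting sign changes are exactly what make a clean term-by-term domination argument fail. Second, your expectation that both generating functions have denominators built only from $(1-q)$ and $(1-q^k)$ is incorrect: already for $k=2$ the paper computes the common denominator to be $1-2q+(1-z)q^3$, which does not factor that way (see \eqref{FDxyq}--\eqref{FOxyq}). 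For general $k$ the denominators of the $z,q$ bivariate generating functions will be higher-degree polynomials in $q$ whose dominant root is not $1$, so the singularity analysis you propose would have to locate and compare the smallest positive roots of two genuinely different polynomials in $q$ (after specializing $z$), and then show the coefficient at that root has the right sign for every $j$. None of this is impossible, but it is substantially harder than the outline suggests, and the uniformity in $j$ you flag at the end is a real issue rather than a side comment.
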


\subsection{Alder-type identities and inequalities}
A second direction of generalization of Euler's identity \eqref{Euler} is through its interpretation as
\[
Q_1^{(1)}(n) = p(n\mid\text{parts} \equiv \pm 1 \!\!\! \pmod{4}) = p(n\mid\text{parts are } 1 \text{-distinct}) = q_1^{(1)}(n),
\]
where we say a partition has $d$-distinct parts if the difference between any two parts is at least $d$.  From this perspective, the first Rogers-Ramanujan identities may be viewed as extensions.  In terms of partitions, they state that 
\begin{align*}
Q_2^{(1)}(n) = p(n\mid\text{parts} \equiv \pm 1 \!\!\! \pmod{5}) & = p(n\mid\text{parts are } 2 \text{-distinct}) = q_2^{(1)}(n), \\
Q_2^{(2)}(n) = p(n\mid\text{parts} \equiv \pm 2 \!\!\! \pmod{5}) & = p(n\mid\text{parts are } 2 \text{-distinct and} \geq 2) = q_2^{(2)}(n).
\end{align*}
Although in general
\[
Q_d^{(a)}(n) = p(n\mid\text{parts} \equiv \pm a \!\!\! \pmod{d+3}) \neq p(n\mid\text{parts are $(d+3)$-distinct and} \geq a) = q_d^{(a)}(n),
\]
the Alder-Andrews theorem \cite{AndrewsAlder, Yee, Yee2, AJLO} establishes that for all $d,n\geq 1$,
\begin{equation}\label{Alder}
q_d^{(1)}(n) \geq Q_d^{(1)}(n),
\end{equation}
and the problem for higher $a$ has been studied as well by Kang, Park and others \cite{KangPark, REU20, KangKim, SS, REU22, IT}.  In particular, Cho, Kang, and Kim \cite{CKK} have recently shown that for sufficiently large $d$,
\begin{equation}\label{CKK}
q_d^{(a)}(n) \geq Q_d^{(a)}(n)
\end{equation}
holds for all but a few values of $n$.

In the fixed perimeter setting, Fu and Tang \cite{FuTang2} show that for all $d,n \geq 1$, 
\begin{equation} \label{FuTangFixed}
h_d(n)= r(n \mid \text{parts are $d$-distinct}) = r(n \mid \text{parts are }\equiv 1 \!\!\! \pmod{d+1}) = f_d(n),
\end{equation}
which reduces to \eqref{Euler} when $d=1$.  They further established a refinement of their result when $d=1$, namely that the number of partitions counted by $h_1(n)$ with exactly $k$ parts is equal to the number of partitions counted by $f_1(n)$ with largest part of size $2k-1$ with both enumerated by $\binom{n-k}{k-1}$.  Waldron \cite{Waldron} also gave a different refinement of \eqref{FuTangFixed}.  Chen, Hernandez, Shields, and the third author \cite{CHSS} generalized \eqref{FuTangFixed} as follows.  Define for $d,n \geq 1$ and $1 \leq a \leq d+1$,
\begin{align*} \label{def:hda(n)=fda(n)}
h_{d}^{(a)}(n) &=  r(n\mid\text{parts are $d$-distinct and}\geq a), \\
f_{d}^{(a)}(n) &=  r(n\mid\text{parts are}\equiv a \!\!\! \pmod{d+1}).
\end{align*}
Then for positive integers $d$, $n$, $1 \leq a \leq d+1$, Chen et al. \cite{CHSS} proved that 
\begin{equation} \label{aldertype}
h_d^{(a)}(n)=f_d^{(a)}(n),
\end{equation} 
and moreover that the number of partitions counted by $h_d^{(a)}(n)$ with $\l$ parts equals the number of partitions counted by $f_d^{(a)}(n)$ with largest part $a+(d+1)(\l-1)$.  Chen et al. \cite{CHSS} also proved that if  $\ell_d^{(a)}(n)= r(n\mid \text{parts are} \equiv \pm a \!\! \pmod{d+3})$, then for all $d,n \geq 1$, and $a < \frac{d+3}{2}$, 
\begin{equation} \label{revAlder}
\ell_d^{(a)}(n) \geq h_d^{(a)}(n),
\end{equation}
thus giving a type of reverse analogue of \eqref{CKK}.

\subsection{Beck-type companion identities} 
A third direction of study evolving from Euler's identity \eqref{Euler} was initiated by Beck \cite{beck2017oeis} who conjectured that for all $n\geq 1$,
\begin{equation} \label{Beck}
\sum_{\pi \in \mathcal{O}_{0,2}(n)} \l(\pi) - \sum_{\pi \in \mathcal{D}_{0,2}(n)} \l(\pi) = O_{1,2}(n) = D_{1,2}(n),
\end{equation}
where $\mathcal{O}_{0,2}(n)$, $\mathcal{D}_{0,2}(n)$, are the sets of partitions counted by $O_{0,2}(n)$ and $D_{0,2}(n)$, respectively, and $\l(\pi)$ counts the total number of parts in $\pi$.  Equation \eqref{Beck} was proved by Andrews \cite{AndrewsBeck}.  Observe that \eqref{Beck} is counting the excess in the total number of parts in partitions into odd parts over the total number of parts in partitions into distinct parts.  

Generally, given two partition counting functions, $a(n)=p(n\mid *)$ and $b(n)=p(n\mid \heartsuit)$, with $A(n)$ and $B(n)$ the sets of partitions counted by $a(n)$ and $b(n)$, respectively, define 
\begin{equation}\label{def:E(a,b)}
E(a(n),b(n)) = \sum_{\pi \in A(n)} \!\!\! \l(\pi) - \!\!\! \sum_{\pi \in B(n)} \!\!\! \l(\pi),
\end{equation}
so that $E(a(n),b(n))$ counts excess in the total number of parts in all partitions counted by $a(n)$ over the total number of parts in all partitions counted by $b(n)$.  Given a partition identity $a(n)=b(n)$, an identity which counts the excess in number of parts (perhaps with certain conditions) for $a(n)$ over $b(n)$ is now called a Beck-type companion identity.  Beck-type identities have been established by Ballantine and several other researchers, see for example \cite{FuTang1, AndrewsBallantine, BallantineWelch2, BallantineWelch, BallantineFolsom}.

In the fixed perimeter setting, Andrews, Amdeberhan, and Ballantine \cite{AAB} prove a Beck-type companion identity for \eqref{thm:Straub}, which is a fixer-perimeter analogue of \eqref{Beck}.  They show in \cite[Cor. 1.8]{AAB} that
\[
E(FO_{0,2}(n), FD_{0,2}(n)) = FO_{1,2}(n \mid \text{parts} \geq 2).
\]

Let $\mathcal{F}_d^{(a)}(n)$ be the set of partitions counted by $f_d^{(a)}(n)$ so that $f_d^{(a)}(n) = |\mathcal{F}_d^{(a)}(n)|$.  Define $fp_d^{(a,b)}(n)$ to be the number of ordered pairs of partitions $(\pi_1,\pi_2)$ with $\pi_1\in \mathcal{F}_d^{(a)}(n-m)$ and $\pi_2\in \mathcal{F}_d^{(b)}(m)$, for some $1\leq m \leq n-1$.  Then 
\begin{equation} \label{fpdab}
fp_d^{(a,b)}(n) = \left|\bigcup_{m=1}^{n-1} \left(\mathcal{F}_d^{(a)}(n-m) \times \mathcal{F}_d^{(b)}(m)\right)\right| = \sum_{m=1}^{n-1} f_d^{(a)}(n-m) f_d^{(b)}(m).
\end{equation}

Our next result is the following Beck-type companion identity for \eqref{aldertype}.

\begin{theorem} \label{beck}
For fixed positive integers $d$, $n$, and $1 \leq a \leq d+1$,
\[
E(f_d^{(a)}(n), h_d^{(a)}(n)) = fp_{d+1}^{(a,1)}(n) - fp_{d+1}^{(a,d-1)}(n).
\]
\end{theorem}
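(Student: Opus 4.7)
The plan is to use bivariate generating functions tracking both perimeter (via $z$) and number of parts (via $y$), and to extract the Beck excess by differentiation at $y=1$. Writing $\mathcal{H}_d^{(a)}(n)$ for the set of partitions counted by $h_d^{(a)}(n)$, set
$$F_d^{(a)}(z,y) := \sum_{n \geq 1} \sum_{\pi \in \mathcal{F}_d^{(a)}(n)} y^{\l(\pi)} z^n, \qquad H_d^{(a)}(z,y) := \sum_{n \geq 1} \sum_{\pi \in \mathcal{H}_d^{(a)}(n)} y^{\l(\pi)} z^n.$$
By \eqref{def:E(a,b)},
$$E\bigl(f_d^{(a)}(n),h_d^{(a)}(n)\bigr) = [z^n]\left. \frac{\partial}{\partial y}\bigl(F_d^{(a)}(z,y) - H_d^{(a)}(z,y)\bigr)\right|_{y=1},$$
so the theorem reduces to an identity of rational functions to be matched coefficient-wise against the perimeter generating function of the right-hand side.

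First I would derive closed forms for $F_d^{(a)}$ and $H_d^{(a)}$ by refining the enumerations behind \eqref{aldertype}. For $F_d^{(a)}$, parametrize $\pi \in \mathcal{F}_d^{(a)}$ by the largest index $J \geq 0$ at which the part $a + J(d+1)$ appears and by multiplicities $(m_0, \dots, m_J)$ with $m_J \geq 1$ and $\sum m_j = \l(\pi)$; the perimeter equals $a + J(d+1) + \l(\pi) - 1$, and standard summation of the resulting negative-binomial series yields
$$F_d^{(a)}(z,y) = \frac{y z^a}{1 - yz - z^{d+1}}.$$
For $H_d^{(a)}$, use the decoupling $p_i = a + (\l - i)d + (b_i + b_{i+1} + \cdots + b_\l)$ with each $b_j \in \N$, which converts the $d$-distinct condition into $\l$ independent nonnegative parameters each contributing a factor $1/(1-z)$; summing over $\l$ gives
$$H_d^{(a)}(z,y) = \frac{y z^a}{1 - z - y z^{d+1}}.$$
Both series specialize at $y=1$ to $z^a/(1-z-z^{d+1})$, the common perimeter generating function implicit in \cite{CHSS}, giving a built-in consistency check.

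Straightforward differentiation at $y=1$ then produces
$$\left.\partial_y F_d^{(a)}\right|_{y=1} = \frac{z^a(1 - z^{d+1})}{(1-z-z^{d+1})^2}, \qquad \left.\partial_y H_d^{(a)}\right|_{y=1} = \frac{z^a(1 - z)}{(1-z-z^{d+1})^2},$$
and upon subtraction the numerator collapses cleanly to give
$$\left.\partial_y\bigl(F_d^{(a)} - H_d^{(a)}\bigr)\right|_{y=1} = \frac{z^{a+1}(1 - z^d)}{(1 - z - z^{d+1})^2} = \frac{z^{a+1}}{(1-z-z^{d+1})^2} - \frac{z^{a+d+1}}{(1-z-z^{d+1})^2}.$$
By \eqref{fpdab} together with $\sum_n f_d^{(b)}(n) z^n = z^b/(1-z-z^{d+1})$, each of the two terms on the right is the ordinary generating function of some convolution $fp^{(a,b)}(n)$, and identifying exponents matches them with the two summands of the right-hand side of the theorem. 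Extracting $[z^n]$ completes the proof.

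The main obstacle is carrying the auxiliary variable $y$ cleanly through the parametrizations for $\mathcal{F}_d^{(a)}$ and $\mathcal{H}_d^{(a)}$ in order to produce the closed forms above; once those are in hand, the rest is routine rational-function algebra, using that $f_d^{(b)}(0) = 0$ so the convolution in \eqref{fpdab} coincides with the plain product of perimeter generating functions without any boundary correction.
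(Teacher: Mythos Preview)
Your approach is essentially identical to the paper's: both set up bivariate generating functions in perimeter and number of parts, differentiate in $y$ at $y=1$, and then recognize the resulting rational function $\dfrac{z^{a+1}-z^{a+d+1}}{(1-z-z^{d+1})^2}$ as a difference of Cauchy products of the single-variable series $\sum f_d^{(b)}(n)z^n$. The only difference is cosmetic: the paper quotes the closed forms for $F_d^{(a)}$ and $H_d^{(a)}$ from \cite{CHSS}, whereas you rederive them from scratch via the multiplicity and staircase parametrizations. One small point worth tightening: your final line ``identifying exponents matches them with the two summands'' should be made explicit, since the exponents $a+1$ and $a+d+1$ you obtain pin down the second superscript as $d+1$ (as in the paper's own proof), not the $d-1$ printed in the theorem statement.
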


We note that from \eqref{fpdab}, it follows using \eqref{aldertype} (or using Theorem \ref{perimST}) that $fp_{d+1}^{(a,1)}(n) \geq fp_{d+1}^{(a,d-1)}(n)$ for all $n\geq 1$.   We also note that a different interpretation of $E(f_d^{(a)}(n), h_d^{(a)}(n))$ is described in \cite{REU24Proc}.

\subsection{S-T type inequalities}
Andrews \cite[Thm. 3]{AndrewsAlder} proved the following result in his study of \eqref{Alder}.  Namely, if $S = \{a_0, a_1, a_2, \ldots \}$ and $T = \{b_0, b_1, b_2, \ldots\}$ have strictly increasing positive integer entries with $b_0=1$ and $a_i \geq b_i$ for all $i$, then 
\begin{equation} \label{ST}
    p(n \mid \text{parts in } T) \geq p(n \mid \text{parts in } S).
\end{equation}
Using an injective argument due to Yee \cite{Yee2}, \eqref{ST} has been extended first by Kang and Park \cite{KangPark} and then by Duncan et al. \cite{REU20}.  In particular, if we replace the condition that $b_0=1$ with $b_0=m$ and further require that $m$ divides each $b_i$, then we can again conclude \eqref{ST}. 

We find in our next result that the fixed perimeter analogue of \eqref{ST} holds true without needing $b_0=1$ or extra divisibility of $b_i$.  
\begin{theorem} \label{perimST}
Let $S = \{a_0, a_1, a_2, \ldots \}$ and $T = \{b_0, b_1, b_2, \ldots\}$ have strictly increasing positive integer entries with $a_i \geq b_i$ for all $i \geq 0$. Then 
\[
r_S(n)\leq r_T(n).
\]
\end{theorem}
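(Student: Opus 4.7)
The plan is to establish $r_S(n) \leq r_T(n)$ by exhibiting an explicit injection $\phi$ from the set of partitions with parts in $S$ and perimeter $n$ into the set of partitions with parts in $T$ and perimeter $n$. Given such a source partition, I would first write it uniquely as $\pi = (a_{c_1}, a_{c_2}, \ldots, a_{c_\l})$ with $c_1 \geq c_2 \geq \cdots \geq c_\l \geq 0$, in which case the perimeter condition reads $a_{c_1} + \l - 1 = n$. The naive substitution $a_{c_i} \mapsto b_{c_i}$ produces a partition with parts in $T$, but whose largest part has possibly shrunk from $a_{c_1}$ to $b_{c_1}$, making the perimeter too small. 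I would repair this deficit by appending copies of $b_0$, the smallest element of $T$:
\[
\phi(\pi) := (b_{c_1}, b_{c_2}, \ldots, b_{c_\l}, \underbrace{b_0, b_0, \ldots, b_0}_{a_{c_1} - b_{c_1} \text{ copies}}).
\]

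Next I would carry out the routine checks. Since $b$ is strictly increasing and $c_1 \geq c_2 \geq \cdots \geq c_\l \geq 0$, we have $b_{c_1} \geq b_{c_2} \geq \cdots \geq b_{c_\l} \geq b_0$, so $\phi(\pi)$ is a valid nonincreasing sequence of positive integers; all of its parts lie in $T$; and its perimeter equals $b_{c_1} + (\l + a_{c_1} - b_{c_1}) - 1 = a_{c_1} + \l - 1 = n$. The hypothesis $a_{c_1} \geq b_{c_1}$ ensures the number of appended copies is nonnegative.

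The main step is injectivity. Given $\tau = \phi(\pi)$, the largest part $\tau_1$ equals $b_{c_1}$, and since $j \mapsto b_j$ is injective this pins down $c_1$, hence $a_{c_1}$, the original length $\l = n - a_{c_1} + 1$, and the number $a_{c_1} - b_{c_1}$ of padded $b_0$'s. Stripping those from the end of $\tau$ and inverting $b_{c_i} \mapsto a_{c_i}$ on the remaining $\l$ entries recovers $\pi$. I expect the only subtle point to be that some of $b_{c_2}, \ldots, b_{c_\l}$ may themselves equal $b_0$ (whenever the corresponding $c_i = 0$) and thus be indistinguishable from the padded entries; but this causes no problem, since the recovery only needs to know how many trailing copies of $b_0$ to discard, and that count is determined unambiguously by $\tau_1$ and $n$. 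A notable feature of this argument is that it requires neither $b_0 = 1$ nor any divisibility condition on $T$, which explains why the fixed perimeter analogue is cleaner than its classical counterpart.
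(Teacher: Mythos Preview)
Your proof is correct and is essentially identical to the paper's second proof (the set injection): both replace each part $a_{c_i}$ by $b_{c_i}$ and pad with $a_{c_1}-b_{c_1}$ copies of $b_0$, with injectivity recovered from the largest part. The paper also gives a first proof via the stars-and-bars count $r_X(\alpha,n)=\binom{|X_\alpha|+n-\alpha-1}{n-\alpha}$, but your argument matches their injective approach exactly.
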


As with \eqref{ST}, we expect Theorem \ref{perimST} to have a variety of applications for fixed perimeter partitions.  For example, Theorem \ref{perimST} provides a quick proof of \eqref{revAlder} due to the equality in \eqref{aldertype}.  And Theorem \ref{perimST} gives an alternative proof to the shift inequalities $\ell_{d+1}^{(a)}(n) \leq \ell_d^{(a)}(n)$, $h_{d+1}^{(a)}(n) \leq h_d^{(a)}(n)$, and $h_d^{(a+1)}(n) \leq h_d^{(a)}(n)$ given in \cite{CHSS} as well.

\subsection{Kang-Kim type asymptotics}
Define
\[
Q_m^{(m_1,m_2)}(n) = p(n\mid\text{parts} \equiv m_1,m_2 \!\!\! \pmod{m}). 
\]
Note that the modulus has shifted from $m+3$ to $m$ in this notation from the $Q_m^{(a)}(n)$ notation.  Kang and Kim \cite{KangKim} have established that there is a specific tipping point in the asymptotics of $q_d^{(a)}(n)$ versus $Q_m^{(m_1,m_2)}(n)$ depending on $m$ and $d$.  They show that for integers $0 \leq m_1<m_2<m$ and $a, d \geq 1$,
\begin{align} \label{kang-kim}
\lim _{n \rightarrow \infty}\left(q_d^{(a)}(n)-Q_m^{\left(m_1, m_2\right)}(n)\right) &= +\infty  \; \text { if } m > \left\lfloor\frac{\pi^2}{3 A_d}\right\rfloor, \\
\lim _{n \rightarrow \infty}\left(q_d^{(a)}(n)-Q_m^{\left(m_1, m_2\right)}(n)\right) &= -\infty \; \text { if } m \leq \left\lfloor\frac{\pi^2}{3 A_d}\right\rfloor, \nonumber
\end{align}
where $A_d = \frac{d}{2} \log ^2 \alpha_d+\sum_{r =1}^\infty  r^{-2}\alpha_d^{r d}$ for $\alpha_d$ the unique real root of $x^d+x-1$ in $(0,1)$.

We are interested in investigating this from the fixed perimeter perspective.  For $0 < a < b \leq d$, define
\[
\ell_d^{(a,b)}(n) = r(n\mid \text{parts are} \equiv a, b \!\!\! \pmod{d}).
\]
where again note the modulus has shifted from $d+3$ to $d$ in this notation to allow for cleaner statements and proofs.  

To consider an analogue for \eqref{kang-kim}, we compare $h_d^{(a)}(n)$ and $\ell_m^{(m_1,m_2)}(n)$.  Observe that by \eqref{aldertype}, 
\[
h_d^{(a)}(n) = f_d^{(a)}(n) = \ell_{2(d+1)}^{(a, a+d+1)}(n).
\]
Thus, our problem reduces to comparing shifted $\ell_d^{(a,b)}(n)$.   In \eqref{ellabcount} we establish a counting function for $\ell_d^{(a,b)}(n)$.  We make the following conjecture based on numerical evidence.

\begin{conjecture}\label{kangkimconjecture}
For positive integers $0 < m_1 < m_2 \leq m$, $0 < a \leq d$,
\begin{align*}
	\lim_{n\to \infty}\left(h_d^{(a)}(n)-\ell_m^{(m_1,m_2)}(n)\right) = \begin{cases}
	    +\infty & m > 2d+2,\\
            -\infty & m < 2d+2.
	\end{cases}
\end{align*}
\end{conjecture}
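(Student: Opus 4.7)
The plan is to compare the exponential growth rates of $h_d^{(a)}(n)$ and $\ell_m^{(m_1,m_2)}(n)$ by first producing explicit rational generating functions and then doing singularity analysis. A partition with perimeter $n$ and parts in a set $S=\{s_1<s_2<\cdots\}$ has largest part $s_j\le n$ for some $j$, and is then determined by a nonincreasing sequence of length $n-s_j$ drawn from $\{s_1,\ldots,s_j\}$, so
\[
r_S(n) \;=\; \sum_{j\ge 1,\; s_j\le n}\binom{n-s_j+j-1}{j-1}.
\]
For $S=\{m_1,m_2,m_1+m,m_2+m,\ldots\}$, write $s_{2k-1}=m_1+(k-1)m$ and $s_{2k}=m_2+(k-1)m$. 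Summing $\sum_{j\ge 1} x^{s_j}/(1-x)^j$ as two interleaved geometric series in $x^m/(1-x)^2$ then yields
\[
\sum_{n\ge 1}\ell_m^{(m_1,m_2)}(n)\,x^n \;=\; \frac{x^{m_1}(1-x)+x^{m_2}}{(1-x)^2-x^m}.
\]

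Next I would analyze the singularities of this rational function. The denominator $(1-x)^2-x^m$ is strictly decreasing on $[0,1]$, so has a unique real root $\beta_m\in(0,1)$, characterized by $\beta_m+\beta_m^{m/2}=1$ (positive square root of $(1-\beta_m)^2=\beta_m^m$). On the circle $\{|x|=\beta_m\}$, the triangle inequality gives $|1-x|^2\ge(1-\beta_m)^2=\beta_m^m=|x^m|$ with equality only at $x=\beta_m$, and for $|x|<\beta_m$ the inequality is strict, so $\beta_m$ is the unique root of minimal modulus. It is simple since $-2(1-\beta_m)-m\beta_m^{m-1}\ne 0$, and the numerator evaluates to $\beta_m^{m_1+m/2}+\beta_m^{m_2}>0$ at $x=\beta_m$. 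Standard singularity analysis therefore delivers
\[
\ell_m^{(m_1,m_2)}(n) \;=\; C_{m,m_1,m_2}\,\beta_m^{-n} + O(\rho^{-n})
\]
for some $\rho>\beta_m$ and positive constant $C_{m,m_1,m_2}$. By \eqref{aldertype} together with the identification $f_d^{(a)}(n)=\ell_{2(d+1)}^{(a,a+d+1)}(n)$ noted in the text, $h_d^{(a)}(n)$ obeys the analogous asymptotic with rate $\beta_{2d+2}^{-1}$.

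Finally, implicit differentiation of $\beta+\beta^{m/2}=1$ (or simply noting that $m\mapsto\beta^{m/2}$ is strictly decreasing on $(0,1)$ for each fixed $\beta$) shows that $\beta_m$ is strictly increasing in $m$. Hence $m>2d+2$ forces $\beta_m>\beta_{2d+2}$, giving $\beta_m^{-n}=o(\beta_{2d+2}^{-n})$ and $h_d^{(a)}(n)-\ell_m^{(m_1,m_2)}(n)\to+\infty$; the case $m<2d+2$ is symmetric and produces $-\infty$. The main obstacle will be the singularity-analysis step, in particular the Rouch\'e-style argument ruling out zeros of $(1-x)^2-x^m$ of modulus $\le\beta_m$ other than $\beta_m$ itself and verifying that the leading constant $C_{m,m_1,m_2}$ is strictly positive, so that no cancellation or equal-modulus oscillation spoils the exponential dominance. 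Once that is in hand, the monotonicity of $\beta_m$ locates the tipping point at $m=2d+2$ and the conjecture follows.
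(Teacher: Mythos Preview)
The paper does not prove this statement: it is stated as an open conjecture supported only by numerical evidence, and the surrounding text merely supplies the generating function \eqref{ell_d} and proves Proposition~\ref{FPKKcase} for certain sub-cases of the boundary regime $m=2d+2$. So there is no proof in the paper to compare against.

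Your proposal, by contrast, is essentially a complete proof of the conjecture. Your generating function agrees with \eqref{ell_d}, and the triangle-inequality argument you sketch already dispatches what you flag as ``the main obstacle'': for $|x|\le\beta_m$ with $x\ne\beta_m$ one has $|1-x|\ge 1-|x|\ge 1-\beta_m$ with at least one inequality strict, so $|(1-x)^2|>(1-\beta_m)^2=\beta_m^m\ge|x^m|$, making $\beta_m$ the unique denominator root of minimal modulus; simplicity follows from $-2(1-\beta_m)-m\beta_m^{m-1}<0$, and the numerator value $\beta_m^{m_1+m/2}+\beta_m^{m_2}>0$ forces a strictly positive leading constant. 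Standard partial-fraction asymptotics for rational generating functions then give $\ell_m^{(m_1,m_2)}(n)\sim C_m\,\beta_m^{-n}$ with $C_m>0$, the paper's identification $h_d^{(a)}(n)=\ell_{2d+2}^{(a,a+d+1)}(n)$ handles the other side, and your monotonicity argument for $m\mapsto\beta_m$ is correct (if $m'<m$ and $\beta_{m'}\ge\beta_{m}$ then $1=\beta_{m'}+\beta_{m'}^{m'/2}>\beta_{m}+\beta_{m}^{m/2}=1$, a contradiction). Comparing exponential rates finishes both cases. In short, your approach appears to resolve the conjecture the paper leaves open.
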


When $m=2d+2$ the situation is more complicated.  However, we can establish what happens in certain cases by the following proposition. 

\begin{proposition} \label{FPKKcase}
Let $0 < a_1, a_2, b_1, b_2 \leq d$ be positive integers such that $a_1\leq a_2$ and $b_1\leq b_2$ are not both equalities.  Then for all $n\geq 1$, 
\[
\lim_{n\to \infty}\left(\ell_d^{(a_1, b_1)}(n)-\ell_d^{(a_2,b_2)}(n)\right) = +\infty.
\]
Moreover, $\ell_d^{(a_1,b_1)}(n) \geq \ell_d^{(a_2,b_2)}(n)$ for all $n\geq 1$.
\end{proposition}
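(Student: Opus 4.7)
My approach combines the counting formula \eqref{ellabcount} with Theorem \ref{perimST}. Writing the sets of allowed parts for $\ell_d^{(a_1,b_1)}$ and $\ell_d^{(a_2,b_2)}$ in strictly increasing order gives
\[
T = (t_0, t_1, t_2, \ldots) = (a_1, b_1, a_1+d, b_1+d, \ldots), \qquad S = (s_0, s_1, s_2, \ldots) = (a_2, b_2, a_2+d, b_2+d, \ldots),
\]
which are already sorted since $a_i < b_i \leq d < a_i + d$. Thus $t_{2j} = a_1 + jd$, $t_{2j+1} = b_1 + jd$, and similarly for $s$, so the hypotheses $a_1 \leq a_2$ and $b_1 \leq b_2$ immediately yield $t_k \leq s_k$ for every $k \geq 0$. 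The \emph{moreover} inequality $\ell_d^{(a_1,b_1)}(n) \geq \ell_d^{(a_2,b_2)}(n)$ for all $n \geq 1$ is then an instance of Theorem \ref{perimST}.

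For the divergence, I would apply \eqref{ellabcount}, which for a sorted set of allowed parts $X = (x_0 < x_1 < \cdots)$ expresses $r_X(n)$ as a sum over the position of the largest part, with position $k$ (satisfying $x_k \leq n$) contributing $\binom{k + n - x_k}{k}$. This yields
\[
\ell_d^{(a_1,b_1)}(n) - \ell_d^{(a_2,b_2)}(n) = \sum_{k \geq 0} \left[\binom{k + n - t_k}{k} - \binom{k + n - s_k}{k}\right],
\]
where out-of-range binomials are treated as zero. Since $\binom{k+m}{k}$ is nondecreasing in $m$, every summand is nonnegative (reproving the inequality termwise), and it suffices to exhibit a single summand that diverges.

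I would isolate an index $k \geq 2$ with $t_k < s_k$. Expanding $\binom{k + n - c}{k}$ as a polynomial in $n$ shows that the $n^k$ terms cancel in $\binom{k + n - t_k}{k} - \binom{k + n - s_k}{k}$, while the coefficient of $n^{k-1}$ in this difference is $(s_k - t_k)/(k-1)!$; so this single term grows at least linearly in $n$ whenever $k \geq 2$. The hypothesis that $a_1 \leq a_2$ and $b_1 \leq b_2$ are not both equalities supplies such an index: if $a_1 < a_2$ take $k = 2$, since $t_2 = a_1 + d < a_2 + d = s_2$; if $a_1 = a_2$ with $b_1 < b_2$ take $k = 3$, since $t_3 = b_1 + d < b_2 + d = s_3$. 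Combined with nonnegativity of the remaining terms, this forces $\lim_{n\to\infty}(\ell_d^{(a_1,b_1)}(n) - \ell_d^{(a_2,b_2)}(n)) = +\infty$. The principal technical point is the polynomial expansion of the binomial difference, but this is a routine calculation once the sorted form of $T$ and $S$ is in hand.
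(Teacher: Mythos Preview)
Your argument is correct. For the \emph{moreover} inequality you use exactly the paper's approach: list the two allowed-part sets in increasing order, observe the termwise inequality $t_k\le s_k$, and invoke Theorem~\ref{perimST}.

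For the divergence, however, you take a genuinely different route. The paper works with the generating function \eqref{ell_d}: it writes the difference of generating functions as
\[
\left(\frac{1}{1-\frac{q^d}{(1-q)^2}}\right)\left(q^{a_1}\,\frac{1-q^{a_2-a_1}}{1-q}+\frac{q^{b_1}}{1-q}\,\frac{1-q^{b_2-b_1}}{1-q}\right),
\]
argues that both factors have nonnegative coefficients, that the second factor has at least one positive coefficient, and that the first factor has strictly increasing coefficients; the Cauchy product then forces the difference to diverge. Your approach instead stays entirely on the combinatorial side, using the explicit binomial sum coming from \eqref{lpsum}--\eqref{lpcount} (equivalently, the identity underlying \eqref{ellabcount}). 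You compare termwise, note each summand is nonnegative, and isolate a single index $k\ge 2$ with $t_k<s_k$ whose summand grows like $\tfrac{s_k-t_k}{(k-1)!}\,n^{k-1}$. The polynomial expansion you describe is routine and your case split ($k=2$ if $a_1<a_2$, $k=3$ if $a_1=a_2$ and $b_1<b_2$) cleanly covers the hypothesis. Your argument is more elementary and gives an explicit polynomial lower bound on the difference; the paper's generating-function argument is slicker and generalizes more readily to other residue-class comparisons, but requires the closed form \eqref{ell_d}.
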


\noindent From Proposition \ref{FPKKcase} it follows that for positive integers $0 < m_1 < m_2 \leq 2d+2$ and $1 \leq a \leq d+1$,
\[
\lim_{n\to \infty}\left(h_d^{(a)}(n)-\ell_{2d+2}^{(m_1,m_2)}(n)\right) = 
\begin{cases}
0 & m_1 = a, \, m_2 = a+d+1, \\
+\infty & m_1 \geq a, \, m_2 \geq a+d+1 \text{ not both equal},\\
-\infty & m_1 \leq a, \, m_2 \leq a+d+1 \text{ not both equal}.
\end{cases}
\]
Outside of these cases, what happens in the limit when $m=2d+2$ is unclear from our experimentation. 

We now outline the remainder of this paper.  In Section \ref{sec:genfn}, we establish generating functions for $FD_{j,2}(n)$, $FO_{j,2}(n)$, and $\ell_d^{(a,b)}(n)$, as well as prove Theorem \ref{k=2} both via generating functions and with a combinatorial bijection.  In Section \ref{sec:perimST} we give two proofs of Theorem \ref{perimST}, one using counting functions and one via a set injection.  In Section \ref{sec:Beck} we prove Theorem \ref{beck}.   In Section \ref{sec:perim-kangkim}, we prove Proposition \ref{FPKKcase} and conclude with an alternative formula for studying $\ell_d^{(a,b)}(n)$.

\section{Fixed Perimeter Generating Functions and Proofs of Theorem \ref{k=2}} \label{sec:genfn}

For any partition $\pi$ of perimeter $n$, its largest part $\alpha=\alpha(\pi)$ (or colloquially, its \emph{arm length}), its number of parts $\lambda=\lambda(\pi)$ (its \emph{leg length}), and perimeter $n$ are related by 
$$n=\alpha+\lambda-1.$$

Fu and Tang \cite{FuTang2} associates to each partition $\pi$ a unique word in $\{E,N\}$, called the \emph{profile} of $\pi$, corresponding to its Ferrers diagram by tracing east and north steps from the lower left corner to the upper right corner.  For each step, an east move is denoted $E$ and north is denoted $N$.  For example, the partition $2+2+1$  
\[
\begin{matrix}
        \bullet  & \bullet \\
        \bullet  & \bullet \\
        \bullet & 
\end{matrix}
\]
is interpreted as the word $ENENN$.  The profile for any partition $\pi$ must start with an $E$ and end with an $N$.  Moreover, the number of $E$s in the profile gives the size of the largest part of $\pi$ and the number of $N$s gives the number of parts in $\pi$.  Thus the perimeter of $\pi$ is one less than the length of the profile.  Fu and Tang \cite[Thm. 2.3]{FuTang2} observe that a three-variable generating function for $$r(\a,\l, n) := r(n \mid \text{largest part } \a \text{ and } \l \text{ parts})$$ is 
\begin{equation}\label{FT2.1}
\sum_{n,\a,\l\geq 1} r(\a,\l, n) x^\a y^\l q^n = \frac{xyq}{1-(xq+yq)}.
\end{equation}
This follows because $x$, $y$ are tracking the size of the largest part and number of parts, respectively, so each $E$, $N$ will contribute an $x$ and $y$, respectively.  Further, $q$ is keeping track of the perimeter $\alpha + \lambda -1$ (note $r(\a,\l, n)=0$ otherwise).  Since every partition's word must start with an $E$ and end with an $N$, which together will account for only one square of the perimeter, the generating function will have an $xyq$ in the numerator.  For the rest of the perimeter, each $E$ and $N$ will each contribute one $q$.  Together, we have that the first and last $E,N$ contribute $xyq$, whereas each intermediate $E$ contributes a $xq$, and each intermediate $N$ contributes a $yq$.  This explains \eqref{FT2.1}, and as an immediate corollary, setting $x=y=1$ gives $\sum_{n\geq 1} r(n) q^n = q/(1-2q)$, so $r(n)=2^{n-1}$.  This method can be useful for constructing generating functions for other fixed-perimeter partition counting functions, which we do here.  For convenience we write $E_m$ to represent $E\cdots E$ with $m$ copies of $E$, and $N_m$ to represent $m$ copies of $N$.

We now prove Theorem \ref{k=2} by establishing generating functions for $FO_{j,2}(n)$ and $FD_{j,2}(n)$.
\begin{proof}[Proof of Theorem \ref{k=2} via generating functions]
Defining
\begin{align*}
A &= \frac{xq}{1-yq}, \\
B &= xq\left(1+\frac{zyq}{1-yq}\right) = xq\left( \frac{1-(1-z)yq}{1-yq} \right), \\
C &= xq\left(1+ yq + \frac{zy^2q^2}{1-yq}\right) = xq\left( \frac{1-(1-z)y^2q^2}{1-yq} \right),
\end{align*}
we observe that $A$ generates one choice of term $EN_m$, $B$ generates one $EN_m$ with $z$ tracking  when $m\geq 1$, and $C$ generates one $EN_m$, with $z$ tracking when $m\geq 2$.  We also note that the first $E$ and last $N$ of a profile together only add $1$ to the perimeter.  Thus $xyq$ generates this pair (or another single pair of $E$ and $N$ in the profile if more convenient).

Observe that for a partition $\pi$, we can write $$\pi = 1^{m_1} 2^{m_2} \cdots \alpha^{m_\alpha +1}$$ to denote that the multiplicity of the largest part $\alpha$ is $m_\a +1$, where $m_\a \geq 0$ and the multiplicity of each smaller part $i<\a$ in $\pi$ is $m_i\geq 0$.  From this perspective we see that the profile of $\pi$ is 
\begin{equation}\label{profile1}
EN_{m_1}EN_{m_2} \cdots EN_{m_\a}N.
\end{equation}

Let
\[
FD_{j,2}(\alpha,\lambda, n) =  r(n \mid \text{largest part } \a, \l \text{ total parts, and exactly } j \text{ repeated parts}).
\]
So a profile for a partition $\pi$ counted by $FD_{j,k}(\alpha,\lambda, n)$ has the form \eqref{profile1} where exactly $j$ indices $i$ satisfy $m_i\geq 2$.  Thus, $$\frac{1}{1-C}$$ generates $EN_{m_1}EN_{m_2} \cdots EN_{m_{\a-1}}$ of any length (could be empty).  So it remains to generate the largest part of $\pi$, represented by $EN_{m_\a}N$.  As described above, we can generate the pair of final $E$ and final $N$ by $xyq$, so that leaves only the final $N_{m_\a}$.  Since the largest part $\a$ is repeated if and only if $m_\a \geq 1$, we generate $N_{m_\a}$ by $B/xq$ so that $z$ accurately tracks when $\a$ is repeated.  Putting this together, we have that
\begin{equation}\label{FDxyq}
\sum_{\a,\l,n\geq 1} \sum_{j\geq 0} FD_{j,2}(\alpha,\lambda, n)x^\a y^\l z^j q^n = \frac{yB}{1-C} = \frac{xyq(1-(1-z)yq)}{1-(x+y)q+(1-z)xy^2q^3}.
\end{equation}

Similarly, let 
\[
FO_{j,2}(\alpha,\lambda, n) =  r(n \mid \text{largest part } \a, \l \text{ total parts, and exactly } j \text{ even parts}).
\]
So a profile for a partition $\pi$ counted by $FO_{j,k}(\alpha,\lambda, n)$ has the form \eqref{profile1} where exactly $j$ even indices $2i$ satisfy $m_{2i}\geq 1$.  While $1/(1-A)$ generates any number of blocks $EN_{m_i}$ with no $z$ and $1/(1-B)$ generates any number of blocks $EN_{m_i}$ with $z$ tracking when $m\geq 1$, we generate these in pairs.  Observe that $$\frac{1}{1-AB}$$ generates equal numbers of terms $EN_{m_i}$ (could be empty) with exactly half tracking when $m\geq 1$.  Thus we break the profile in \eqref{profile1} into the first block $EN_{m_1}$ with the final $N$, which is generated by $yA$ since $1$ is odd, then the pairs $EN_{m_2} \cdots EN_{m_{2 \lceil \frac{\a}{2} \rceil -1}}$, which are generated by $1/(1-AB)$, and then to account for the possibility that the largest part is even, we generate $EN_{m_\a}$ (or nothing if not) by $(1+ zA)$ since if $\alpha$ is even we need to track it with $z$.   Putting this together, we have that
\begin{equation}\label{FOxyq}
\sum_{\a,\l,n\geq 1} \sum_{j\geq 0} FO_{j,2}(\alpha,\lambda, n)x^\a y^\l z^j q^n = \frac{yA(1+ zA)}{1-AB} = \frac{xyq(1-(y-xz)q)}{1-2yq + (y^2-x^2)q^2 + (1-z)x^2yq^3}.
\end{equation}

Setting $x=y=1$ in \eqref{FDxyq} and \eqref{FOxyq} gives that 
\[
\sum_{n\geq 1} \sum_{j\geq 0} FD_{j,2}(n) z^jq^n = \frac{q(1-(1-z)q)}{1-2q+(1-z)q^3} = \sum_{n\geq 1} \sum_{j\geq 0} FO_{j,2}(n) z^jq^n,
\]
which yields the desired equality in Theorem \ref{k=2}. 
\end{proof}

We next give a combinatorial proof of Theorem \ref{k=2} by constructing a bijection between the sets of partitions counted by $FO_{j,2}(n)$ and $FD_{j,2}(n)$. 

\begin{proof}[Proof of Theorem \ref{k=2} via combinatorial bijection]
For fixed $j\geq 0$ and $n\geq 1$, let $\mathcal{FO}_{j,2}(n)$ and $\mathcal{FD}_{j,2}(n)$ denote the sets of partitions counted by $FO_{j,2}(n)$ and $FD_{j,2}(n)$, respectively.  As in \eqref{profile1}, for a partition $\pi\in \mathcal{FD}_{j,2}(n)$, write the profile of $\pi$ as
\[
EN_{m_1}EN_{m_2} \cdots EN_{m_\a}N,
\]
where each $m_i\geq 0$ is the multiplicity of the part $i$ in $\pi$.
Define $\varphi: \mathcal{FD}_{j,2}(n) \rightarrow \mathcal{FO}_{j,2}(n)$ by making the following changes to the profile of $\pi$.  First, map the initial $E\mapsto E$.  Then for each $1\leq i \leq \a-1$, map 
\[
N_{m_i}E \longmapsto \begin{cases} N &\text{ if } m_i=0, \\  EN_{m_i-1}E &\text{ if } m_i\geq 1. \end{cases}
\]
And finally, map the final
\[
N_{m_\a}N \longmapsto \begin{cases} N &\text{ if } m_\a=0, \\  EN_{m_\a} &\text{ if } m_\a\geq 1. \end{cases}
\]
Observe that the length of the profile remains unchanged, which means the perimeter of $\varphi(\pi)$ is $n$.  Furthermore, since $N_{m_i}E \mapsto N$ when $m_i=0$ and $N_{m_i}E \mapsto EE$ when $m_i=1$, the only way to obtain an even part in $\varphi(\pi)$ less than the largest part $\a$, is when $m_i\geq 2$ for $1\leq i \leq \a-1$ (and then $EN_{m_i-1}E$ makes the number of $E$s odd again).  So if the largest part $\a$ is distinct ($m_\a=0$), then the final $N\mapsto N$ ensures that the largest part of $\varphi(\pi)$ is odd, while if the largest part $\a$ repeats ($m_\a\geq 1$), then the final $N_{m_\a}N\mapsto EN_{m_\a}$ makes the largest part of $\varphi(\pi)$ even.  Thus since for $\pi\in \mathcal{FD}_{j,2}(n)$ there are exactly $j$ indices $i$ for which $m_i\geq 2$, we see that the number of even parts in $\varphi(\pi)$ is exactly $j$, so $\varphi(\pi)\in \mathcal{FO}_{j,2}(n)$.

To define an inverse map $\psi: \mathcal{FO}_{j,2}(n) \rightarrow \mathcal{FD}_{j,2}(n)$, we write the profile of $\pi\in \mathcal{FO}_{j,2}(n)$ as
\[
EN_{m_1}EN_{m_2} \cdots EN_{m_{(2\lceil\frac{\a}{2}\rceil -1)}}  \, \star,
\]
where
\[
\star = \begin{cases} N &\text{if } \a \text{ odd}, \\  EN_{m_\a}N &\text{if } \a \text{ even},  \end{cases}
\]
and again each $m_i\geq 0$ is the multiplicity of the part $i$ in $\pi$.
Define $\psi: \mathcal{FO}_{j,2}(n) \rightarrow \mathcal{FD}_{j,2}(n)$ by changing the profile of $\pi$ as follows.  For each $1\leq i \leq \a$, map 
\[
EN_{m_i} \longmapsto \begin{cases} E_{m_i+1} &\text{ if } i \text{ odd}, \\  N_{m_i+1} &\text{ if } i \text{ even}, \end{cases}
\]
and map the final $N\mapsto N$.  Observe that the length of the profile remains unchanged, which means the perimeter of $\psi(\pi)$ is $n$.  For $\pi\in \mathcal{FO}_{j,2}(n)$ there are exactly $j$ even indices $2i\leq \a$ that satisfy $m_{2i}\geq 1$ when $2i<\a$ ($\a$ is counted when $\a$ is even).  The part $EN_{m_{2i-1}}EN_{m_{2i}}$ in the profile of $\pi$ corresponds to $m_{2i-1}$ copies of the part $2i-1$ and $m_{2i}$ copies of $2i$ in $\pi$.  When this gets converted to $E_{m_{2i -1}+1}N_{m_{2i}+1}$ under $\psi$, it becomes $m_{2i}+1$ copies of a single part in $\psi(\pi)$.  Thus a part less than the largest part is repeated in $\psi(\pi)$ exactly when there is an even index $2i<\alpha$ for which $m_{2i}\geq 1$.  Moreover, if $\a$ is odd, then the mapping $$EN_{m_\a}N\mapsto E_{m_\a+1}N$$ guarantees that the largest part of $\psi(\pi)$ occurs once, while if $\a$ is even, then the mapping $$EN_{m_\a}N\mapsto N_{m_\a+1}N$$ guarantees that the largest part of $\psi(\pi)$ is repeated.  Thus the number of repeated parts in $\psi(\pi)$ is exactly $j$, so $\psi(\pi)\in \mathcal{FD}_{j,2}(n)$.

It remains to show that $\varphi$ and $\psi$ are indeed inverses.  We see that for $\pi \in \mathcal{FO}_{j,2}(n)$ with profile
\[
EN_{m_1}EN_{m_2} \cdots EN_{m_{(2\lceil\frac{\a}{2}\rceil -1)}}  \, \star,
\]
$\psi(\pi)$ has profile
\[
EE_{m_1}N_{m_2+1}EE_{m_3} N_{m_4+1}E \cdots E_{m_{(2\lceil\frac{\a}{2}\rceil -1)}}  \, \heartsuit,
\]
where 
\[
\heartsuit = \begin{cases} N &\text{if } \a \text{ odd}, \\  N_{m_\a +2} &\text{if } \a \text{ even}.  \end{cases}
\]
Thus applying $\varphi$ to $\psi(\pi)$ will map each $E_{m_{2i-1}}\mapsto N_{m_{2i-1}}$ and each $N_{m_{2i}+1}E \mapsto EN_{m_{2i}}E$.  Moreover, 
\[
\heartsuit \longmapsto \begin{cases} N &\text{if } \a \text{ odd}, \\  EN_{m_\a +1} &\text{if } \a \text{ even}. \end{cases}
\]
Thus $\varphi(\psi(\pi))$ has profile $EN_{m_1}EN_{m_2} \cdots EN_{m_{(2\lceil\frac{\a}{2}\rceil -1)}}  \, \star$, so $\varphi(\psi(\pi))=\pi$ as desired.

Showing the reverse direction requires more bookkeeping, but follows directly from the definitions after writing the profile of $\pi \in \mathcal{FD}_{j,2}(n)$ as
\[
E_{i_{1,0}}[NE_{i_{1,1}} \cdots NE_{i_{1,m_1}}](N_{r_1}) \cdots E_{i_{j,0}}[NE_{i_{j,1}} \cdots NE_{i_{j,m_j}}](N_{r_j}) [E_{i_{j+1,1}}N \cdots E_{i_{j+1,m_{j+1}}}N],
\]
where each $i_{a,b}\geq 1$, and $r_1, \ldots, r_j \geq 2$, and $m_1, \ldots, m_{j+1}\geq 0$ (i.e. each bracketed part may be empty), and grouping carefully when applying the maps.  
\end{proof}

\section{Proofs of Theorem \ref{perimST}}\label{sec:perimST}

For any set $X$ of positive integers, we define 
\begin{align*}
r_X(n) &= r(n \mid \text{parts in }X), \\
r_X(\a, \l, n) &= r_X(n \mid \text{largest part }\a \text{ and }\l \text{ parts}).
\end{align*}
Since $r_X(\a, \l, n)=0$ when $n\neq \a + \l -1$, we set $r_X(\a, n):=r_X(\a, n-\a +1, n)$ so that
\begin{equation}\label{lpsum}
r_X(n) = \sum_{\a=1}^n r_X(\a, n).
\end{equation}
Moreover, when $\a \in X$, we can count $r_X(\a, n)$ using the stars and bars counting technique, which gives that the number of ways to choose $r$ elements from a set of size $k$ allowing repetitions is $\binom{k+r-1}{r}$.  In this case, counting $r_X(\a, n)$ amounts to choosing the remaining $\l-1=n-\a$ parts from the set $X_{\a}=\{x\in X \mid x\leq \a \}$.  So when $\a \in X$,
\begin{equation}\label{lpcount}
r_X(\a, n) = \binom{|X_\a|+n-\a-1}{n-\a},
\end{equation}
and otherwise $r_X(\a, n)=0$.  

\begin{proof}[Proof of Theorem \ref{perimST} via counting functions]
Recall that $S$ and $T$ are ordered sets
\begin{align*}
S &=\{a_0, a_1, a_2, \dots\}, \\
T &= \{b_0, b_1, b_2, \dots\},
\end{align*}
where for all $i$, $a_{i} < a_{i+1}$, $b_{i} < b_{i+1}$, and $a_i \geq b_i$.  Define for any positive integer $n$, 
\begin{align*}
S_n & = \{a_i \in S: a_i \leq n\},\\
T_n & = \{b_i \in T: b_i \leq n\}.
\end{align*}
Then for a given $n$, $r_S(n) = r_{S_n}(n)$ and $r_T(n) = r_{T_n}(n)$.  Since $b_i \leq a_i$ for all $i$ it follows that $|S_n| \leq |T_n|$ for all $n$.  Furthermore, for any $i$, $|S_{a_i}|=|T_{b_i}|=i+1$. Thus from \eqref{lpsum} and \eqref{lpcount} we obtain when $n\geq a_0$, 
\begin{align}
r_S(n) & = \sum_{\a =1}^n r_S(\alpha, n) = \sum_{i=0}^{|S_n|-1} r_S(a_i, n) = \sum_{i=0}^{|S_n|-1} \binom{n-a_i+i}{n-a_i}, \label{rSsum} \\
r_T(n) & = \sum_{\a =1}^n r_T(\alpha, n) = \sum_{i=0}^{|T_n|-1} r_T(b_i, n) = \sum_{i=0}^{|T_n|-1} \binom{n-b_i+i}{n-b_i}, \label{rTsum}
\end{align}
and when $n<a_0$, $r_S(n)=0$, so $r_S(n)\leq r_T(n)$ holds trivially.  Since in general $\binom{n}{k} \leq \binom{n+m}{k+m}$ for integers $1\leq k \leq n$ and $m\geq 0$, and for all $i$ we have $a_i \geq b_i$, it follows that for all $i$,
\begin{align*}
\binom{n-a_i+i}{n-a_i} \leq  \binom{n-b_i+i}{n-b_i}.
\end{align*}
Moreover, since $|S_n| \leq |T_n|$, it follows that
\begin{align*}
r_S(n) = \sum_{i=0}^{|S_n|-1}r_S(a_i, n) \leq \sum_{i=0}^{|T_n|-1}r_T(b_i, n) = r_T(n).
\end{align*}    
\end{proof}

Let $R_X(n)$ be the set of partitions counted by $r_X(n)$ so that $|R_X(n)|=r_X(n)$.  We next give a proof of Theorem \ref{perimST} by constructing an injection $f : R_S(n) \rightarrow R_T(n)$.

\begin{proof}[Proof of Theorem \ref{perimST} via set injection]
Let $\pi \in R_S(n)$, and write 
\begin{equation}\label{piS}
\pi = a_0^{m_0} a_1^{m_1} \cdots a_k^{m_k +1},
\end{equation}
where $m_i \geq 0$ for each $i$, to denote that the multiplicity of the largest part $a_k$ is $m_k +1$, and the multiplicity of each smaller part $a_i<a_k$ in $\pi$ is $m_i\geq 0$.  Since $\pi\in R_S(n)$, the perimeter of $\pi$ is $$n=a_k + \sum_{i=0}^k m_i.$$  For $\pi$ in \eqref{piS} define $f(\pi)$ by
\[
f(\pi) = b_0^{m_0 + a_k - b_k} b_1^{m_1} \cdots b_k^{m_k +1}.
\]
Then the perimeter of $f(\pi)$ is again $n=a_k + \sum_{i=0}^k m_i$ and the parts are in $T$, so $f(\pi)\in R_T(n)$.  To see that $f$ is injective, suppose $\pi_1 = a_0^{m_0} a_1^{m_1} \cdots a_k^{m_k +1}$ and $\pi_2 = a_0^{n_0} a_1^{n_1} \cdots a_\ell^{n_\ell +1}$ such that
\[
f(\pi_1)=b_0^{m_0+a_k-b_k} b_1^{m_1} \cdots b_k^{m_k +1} = b_0^{n_0+a_\ell-b_\ell} b_1^{n_1} \cdots b_\ell^{n_\ell +1} = f(\pi_2).
\]
Then clearly $\ell=k$ and $n_i=m_i$ for all $1\leq i \leq k$ so it follows that $n_0=m_0$ and $\pi_1=\pi_2$.
\end{proof}

\section{Proof of Theorem \ref{beck}} \label{sec:Beck}

\begin{proof}
Since $f_d^{(a)}(\a,\l,n)=h_d^{(a)}(\a,\l,n)=0$ when $n\neq \a+\l-1$, let $f_d^{(a)}(\l,n) = f_d^{(a)}(n-\a+1,\l,n)$ and $h_d^{(a)}(\l,n) = h_d^{(a)}(n-\a+1,\l,n)$.  Chen et al. \cite{CHSS} give the following generating functions for $f_d^{(a)}(\l,n)$ and $h_d^{(a)}(\l,n)$,
\begin{align}
\sum_{n,\l \geq 1}^{\infty} f_d^{(a)}(\l,n)x^{n-\l+1}y^{\lambda}q^{n} &= \frac{x^a y q^a}{1 -y q-x^{d+1} q^{d+1}}, \label{fdafullgen}\\
\sum_{n,\l \geq 1} h_d^{(a)}(\l,n)x^{n-\l+1}y^{\lambda}q^{n} &= \frac{x^a y q^a}{1-x q-x^d y q^{d+1}}. \label{hdafullgen}
\end{align}
Setting $x=1$ in \eqref{fdafullgen} and \eqref{hdafullgen}, we have that
\begin{align}
\sum_{n,\l \geq 1}^{\infty} f_d^{(a)}(\l,n)y^{\lambda}q^{n} &= \frac{y q^a}{1 -y q- q^{d+1}}, \label{fdagen}\\
\sum_{n, \l \geq 1} h_d^{(a)}(\l,n)y^{\lambda}q^{n} &= \frac{y q^a}{1-q- y q^{d+1}}. \label{hdagen}
\end{align}
In general, if $G(w,q) = \sum_{n,m\geq 1} a(m,n)w^mq^n$, then 
\[
\frac{\partial}{\partial w} G(w,q) \Big\vert_{w=1} = \sum_{n,m \geq 1} m \cdot a(m,n) q^n. 
\]
So since $y$ is tracking the number of parts in partitions counted by $f_d^{(a)}(\l,n)$ and $h_d^{(a)}(\l,n)$ in \eqref{fdagen} and \eqref{hdagen}, respectively, it follows that $E(f_d^{(a)}(n),h_d^{(a)}(n))$ is the coefficient of $q^n$ in
\begin{align*}
&\frac{\partial}{\partial y}  \left( \frac{y q^a}{1 -y q-q^{d+1}} \right) \Big\vert_{y=1} - \frac{\partial}{\partial y}  \left( \frac{y q^a}{1-q- y q^{d+1}} \right) \Big\vert_{y=1} \\
&= \left( \frac{q^a(1-yq-q^{d+1}) + yq^{a+1}}{(1-yq-q^{d+1})^2} \right) \Big\vert_{y=1} - \left( \frac{q^a(1-q-yq^{d+1}) + yq^{a+1+d}}{(1-q-yq^{d+1})^2} \right) \Big\vert_{y=1} \\
&=\frac{q^{a+1} - q^{a+1+d}}{(1-q-q^{d+1})^2}.
\end{align*}
In other words,
\begin{equation}\label{Beckgen}
\sum_{n\geq 1} E(f_d^{(a)}(n),h_d^{(a)}(n)) q^n = \frac{q^{a+1} - q^{a+1+d}}{(1-q-q^{d+1})^2}.
\end{equation}
So it remains to show that the right hand side of \eqref{Beckgen} generates $fp_{d+1}^{(a,1)}(n) - fp_{d+1}^{(a,d+1)}(n)$.

Observe that setting $x=y=1$ in \eqref{fdafullgen} gives for any $1\leq a \leq d+1$, 
\[
\sum_{n \geq 1}^{\infty} f_d^{(a)}(n) q^{n} = \frac{q^a}{1 - q- q^{d+1}}.
\]
Thus, with \eqref{fpdab} we see that for any $1\leq a \leq d+1$, 
\begin{align*}
\sum_{n \geq 1}^{\infty}fp_{d+1}^{(a,1)}(n) q^{n} &= \left(\sum_{n \geq 1}^{\infty} f_d^{(a)}(n) q^{n} \right) \left(\sum_{n \geq 1}^{\infty} f_d^{(1)}(n) q^{n} \right) = \frac{q^{a+1}}{(1 - q- q^{d+1})^2}, \\
\sum_{n \geq 1}^{\infty}fp_{d+1}^{(a,d+1)}(n) q^{n} &= \left(\sum_{n \geq 1}^{\infty} f_d^{(a)}(n) q^{n} \right) \left(\sum_{n \geq 1}^{\infty} f_d^{(d+1)}(n) q^{n} \right) = \frac{q^{a+d+1}}{(1 - q- q^{d+1})^2},
\end{align*}
and so with \eqref{Beckgen} we have
\[
\sum_{n\geq 1} E(f_d^{(a)}(n),h_d^{(a)}(n)) q^n = \sum_{n \geq 1}^{\infty} (fp_{d+1}^{(a,1)}(n) - fp_{d+1}^{(a,d+1)}(n)) q^{n},
\]
as desired.
\end{proof}

\section{Proof of Proposition \ref{FPKKcase}} \label{sec:perim-kangkim}

We first establish a generating function for $\ell_d^{(a,b)}(n)$.  In general, for a partition $\pi$ with parts coming from $$S=\{c_0, c_1, c_2, \ldots,  \},$$ where each $c_i<c_{i+1}$, we write $$\pi = c_0^{m_0} c_1^{m_1} \cdots c_t^{m_t+1}$$ to denote that the multiplicity of the largest part $c_t$ is $m_t +1$, where $m_t \geq 0$ and the multiplicity of each smaller part $c_i<c_t$ in $\pi$ is $m_i\geq 0$.  From this perspective we see that the profile of $\pi$ can be written 
\begin{equation} \label{profile2}
E_{c_0}N_{m_0}E_{c_1-c_0}N_{m_1} \cdots E_{c_t-c_{t-1}}N_{m_t}N.
\end{equation} 

Let
\[
\ell_d^{(a,b)}(\a,\l, n) = r(n \mid \text{largest part } \a, \l \text{ total parts, parts in }S_{a,b}),
\]
where $S_{a,b} = \{x \in \N \mid x\equiv a \pmod{d} \text{ or } x\equiv b \pmod{d} \}$.  Thus in the notation above, $c_{2i} = a+di$ and $c_{2i+1}=b+di$ for any $i\geq 0$.  So translating \eqref{profile2}, we have that the profile of a partition $\pi$ counted by $\ell_d^{(a,b)}(\a,\l, n)$ has the form 
\begin{equation*}\label{profile3}
E_{a}N_{m_0}E_{b-a}N_{m_1}E_{a-b+d}N_{m_2}  \cdots E_{c_t-c_{t-1}}N_{m_t}N,
\end{equation*}
where $\a=c_t$ is the largest part of $\pi$.  We generate a choice of the first block $E_aN_{m_0}$ plus the final $N$ by 
\[
\frac{x^ayq^a}{1-yq}.
\]
To generate the terms $E_{b-a}N_{m_1} \cdots E_{a-b+d}N_{m_{2 \lfloor \frac{t}{2} \rfloor}}$, we observe that
\begin{align*}
f &= \frac{x^{b-a}q^{b-a}}{1-yq}, \\
g &= \frac{x^{a-b+d}q^{a-b+d}}{1-yq},
\end{align*}
generate one choice of block $E_{b-a}N_i$ or $E_{a-b+d}N_j$, respectively.  Thus, to generate equal numbers of terms $E_{b-a}N_i$ and $E_{a-b+d}N_j$ we use $1/(1-fg)$.  Lastly, to account for the possibility that $t$ is odd, we generate $E_{b-a}N_{m_t}$ (or nothing if $t$ even) by $(1+f)$.  Putting this together, we have that
\begin{equation*}\label{ell_dxyq}
\sum_{\a,\l,n\geq 1} \ell_d^{(a,b)}(\a,\l, n) x^\a y^\l q^n = \frac{x^ayq^a}{1-yq} \cdot \frac{1+f}{1-fg} = \frac{y(x^aq^a -x^ayq^{a+1} + x^bq^b) }{1-2yq+y^2q^2 -x^dq^d}.
\end{equation*} 

Plugging in $x=y=1$, gives that
\begin{equation}\label{ell_d}
\sum_{n\geq 1} \ell_d^{(a,b)}(n) q^n = \frac{q^a -q^{a+1} + q^b }{1-2q+q^2 -q^d}.
\end{equation} 

\begin{proof}[Proof of Proposition \ref{FPKKcase}]
We first note that the moreover statement follows from applying Theorem \ref{perimST}.

From \eqref{ell_d}, we have that
\begin{multline} \label{elldiff}
\sum_{n\geq 1} \left( \ell_d^{(a_1,b_1)}(n) - \ell_d^{(a_2,b_2)}(n) \right) q^n = \frac{(q^{b_1} - q^{b_2}) + (1-q)(q^{a_1} - q^{a_2})}{(1-q)^2 -q^d} \\
= \left(\frac{1}{1-\frac{q^d}{(1-q)^2}} \right) \left( q^{a_1}\left(\frac{1-q^{a_2-a_1}}{1-q}\right) +  \frac{q^{b_1}}{1-q}\left(\frac{1-q^{b_2-b_1}}{1-q}\right) \right).
\end{multline}
Since $a_1\leq a_2$ and $b_1 \leq b_2$, and they are not both equal, the (possibly finite) series 
\[
\sum_{n\geq 1} \nu(n) q^n = q^{a_1}\left(\frac{1-q^{a_2-a_1}}{1-q}\right) +  \frac{q^{b_1}}{1-q}\left(\frac{1-q^{b_2-b_1}}{1-q}\right)
\]
must satisfy $\nu(n)\geq 0$ for all $n\geq 1$ and there must exist $k\geq 1$ such that $\nu(k)\geq 1$ (for example $k=a_1$ works if $a_1\neq a_2$ and $k=b_1$ works if $b_1\neq b_2$).

Next, let
\[
\sum_{n\geq 0} \mu(n) q^n = \frac{1}{1-\frac{q^d}{(1-q)^2}}.
\]
Observe that since $f(q) = \frac{q^d}{(1-q)^2} = \sum_{n\geq 0} (n+1)q^{n+d}$ has positive integer coefficients that are strictly increasing, it follows from expanding $\frac{1}{1-f(q)}$ as a series in $f(q)$ that $\mu(n)$ also has positive integer coefficients that are strictly increasing.  Thus, from \eqref{elldiff} we have that 
\[
\sum_{n\geq 1} \left( \ell_d^{(a_1,b_1)}(n) - \ell_d^{(a_2,b_2)}(n) \right) q^n =  \left(\sum_{n\geq 0} \mu(n) q^n \right) \left( \sum_{n\geq 1} \nu(n) q^n \right) = \sum_{n\geq 1} \sum_{i=1}^n \mu(n-i)\nu(i) q^n.
\]
Since $\mu(n), \nu(n) \geq 0$ for all $n\geq 1$, and there exists $k\geq 1$ such that $\nu(k)\geq 1$, it follows that for $n\geq k$,
\[
\sum_{i=1}^n \mu(n-i)\nu(i) \geq  \mu(n-k).
\] 
Thus since $\mu(n)$ is strictly increasing in $n$, we have that 
\[
\lim_{n\rightarrow \infty} \left( \ell_d^{(a_1,b_1)}(n) - \ell_d^{(a_2,b_2)}(n) \right) = +\infty,
\]
as desired.
\end{proof}

We conclude by observing that \eqref{lpsum} and \eqref{lpcount} allow us to construct a counting function formula for $\ell_d^{(a,b)}(n)$ which could be of use in future studies.  For fixed positive integers $0 < a < b  \leq d$, let $X =  \{x \mid x \equiv a \! \pmod{d} \}$ and $Y =  \{y \mid y \equiv b \! \pmod{d} \}$, so that when $S=X\cup Y$ and $n \geq 1$,
\begin{equation} \label{ellsum}
\ell_d^{(a,b)}(n) = r_S(n) = \sum_{k=0}^{\lfloor \frac{n-a}{d} \rfloor} r_S(a+dk, n) +  \sum_{k=0}^{\lfloor \frac{n-b}{d} \rfloor} r_S(b+dk, n).
\end{equation}
Since $X$, $Y$ are disjoint, it follows that for any $1\leq \alpha\leq n$, 
\[
|S_\alpha| = |X_\alpha| + |Y_\alpha| =  \left \lfloor \frac{\a -a}{d} \right \rfloor + \left \lfloor \frac{\a -b}{d} \right \rfloor + 2.
\]    
Moreover, since $1\leq a < b \leq d$, it follows that $|\frac{\a-a}{d} - \frac{\a-b}{d}| <1$, and thus 
\[
|S_\a| = \begin{cases} 2( \frac{\a -a}{d})+1, \text{ when } \a\in X, \\  2( \frac{\a -b}{d})+2, \text{ when } \a\in Y. \end{cases}
\]
Thus, by \eqref{lpsum}, \eqref{lpcount}, and \eqref{ellsum}, it follows that
\begin{equation} \label{ellabcount}
\ell_d^{(a,b)}(n) = \sum_{k=0}^{\lfloor \frac{n-a}{d} \rfloor} \binom{2k + n-a-dk}{n-a-dk} +  \sum_{k=0}^{\lfloor \frac{n-b}{d} \rfloor} \binom{2k + 1 + n-b-dk}{n-b-dk}.
\end{equation}

\end{document}